\newtheorem{theorem}{Theorem}[section]
\newtheorem{lemma}{Lemma}[section]
\begin{document}

\title{The $H(n)$-move is an unknotting operation for virtual and welded links}

\author{Danish Ali}
\address{Department of Mathematics, Dalian University of Technology, China}
\email{danishali@mail.dlut.edu.cn}

\author{Zhiqing Yang}
\address{Department of Mathematics, Dalian University of Technology, China}
\email{yangzhq@dlut.edu.cn}

\author{Abid Hussain}
\address{School of Computing and  Artificial Intelligence Southwest Jiaotong University, China}
\email{abidhussain@my.swjtu.edu.cn}

\author{Mohd Ibrahim Sheikh}
\address{Department of Mathematics, Graduate School of Natural Sciences, Pusan National University, Busan 46241, Republic of Korea}
\email{ibrahimsheikh@pusan.ac.kr}

\date{\today}
\keywords{unknotting operations, $H(n)$-move , virtualization , virtual knots, welded knots}
\subjclass[2020]{57K10 , 57K12}

\begin{abstract}
\large{ An unknotting operation is a local move such that any knot diagram can be transformed into a diagram of the trivial knot by a finite sequence of these operations plus some Reidemeister moves. It is known that for all $n \geq 2$ the $H(n)$-move is an unknotting operation for classical knots and links. In this paper, we extend the classical unknotting operation $H(n)$-move to virtual knots and links. Virtualization and forbidden move are well-known unknotting operations for virtual knots and links. We also show that virtualization and forbidden move can be realized by a finite sequence of generalized Reidemeister moves and $H(n)$-moves. 
}
\end{abstract}

\maketitle

\large{
\section{Introduction}
Virtual knots were first introduced by Kauffman \cite{Kauffman1}. A virtual knot is a generalization of knots in thickened surfaces. A virtual link diagram has two types of crossings: classical crossings and virtual crossings. Classical crossings (positive and negative) and a virtual crossing are shown in Fig.\ref{cavc}. A classical crossing has an over arc and an under arc. While a virtual crossing has neither over arc nor under arc. A virtual crossing consists of two arcs, with a small circle placed around the double point. Two classical knot diagrams $K$ and $K'$ are equivalent if and only if $K$ can be transformed into $K'$ using classical Reidemeister moves ($R1-R3$) as shown in Fig.\ref{crm}. Similarly, the two virtual knot diagrams $V$ and $V'$ are equivalent if and only if $V$ can be transformed into $V'$ using classical Reidemeister moves and virtual Reidemeister moves ($VR1-VR4$) as shown in Fig.\ref{vrm}. Classical Reidemeister moves and virtual Reidemeister moves are known as generalized Reidemeister moves.

\begin{figure}
     \centering
      \begin{subfigure}{0.3\textwidth}
      \centering
         \includegraphics[width=0.6\textwidth]{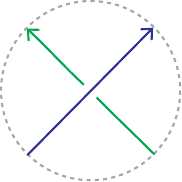}
         \caption{Positive crossing}
         \end{subfigure}
     \hfill
      \begin{subfigure}{0.3\textwidth}
       \centering
 \includegraphics[width=0.6\textwidth]{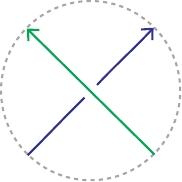}
 \caption{Negative crossing}
     \end{subfigure}
      \hfill
      \begin{subfigure}{0.3\textwidth}
       \centering
 \includegraphics[width=0.6\textwidth]{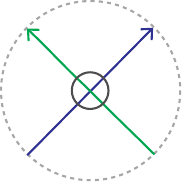}
 \caption{Virtual crossing}
     \end{subfigure}
    \caption{Classical crossings and a virtual crossing}
        \label{cavc}
\end{figure}

\begin{figure}
\includegraphics[page=1, width=0.7\textwidth]{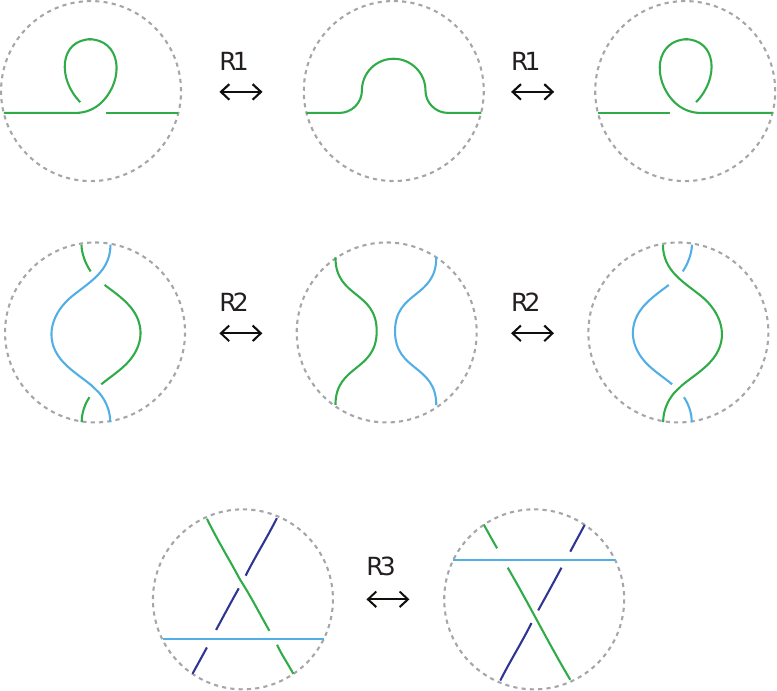}
\caption{Classical Reidemeister moves}
\label{crm}
\end{figure}

\begin{figure}
\includegraphics[width=\textwidth]{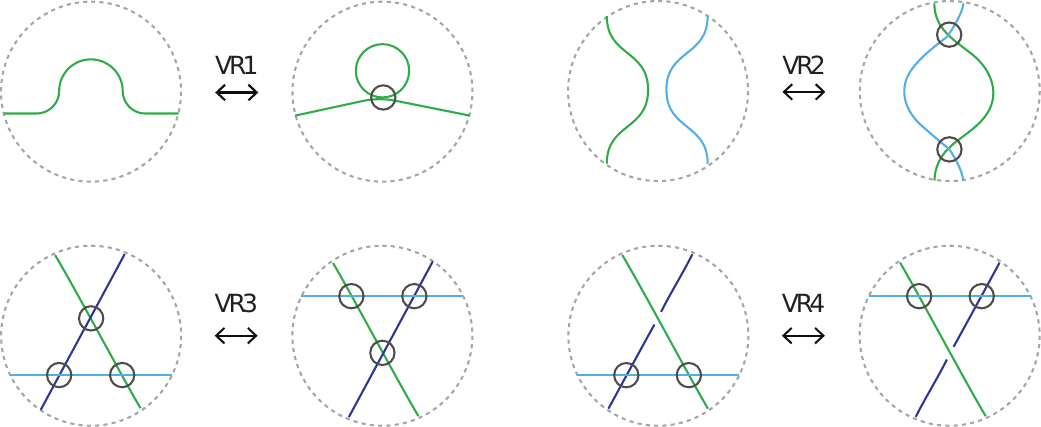}
\caption{Virtual Reidemeister moves and a mixed Reidemeister move}
\label{vrm}
\end{figure}

A local move is a transformation of link diagrams $D$ to $D'$ that are identical except inside one small disk of each. An unknotting operation is a local change on knot (link) diagrams. Any knot (link) diagram can be transformed into a trivial unknot (unlink) diagram by a series of unknotting operations plus some Reidemeister moves. One main focus of knot theory is how to unknot a given knot, and how to distinguish different knots using knot invariants. Many knot invariants are related to unknotting operations, therefore, unknotting operations are very important in knot theory. Many local moves are known as unknotting operations for classical knots and links. Some well-known unknotting operations for classical knots and links are the crossing change, $\Delta$-move \cite{hm}, $\sharp$-move \cite{hm2}, $\Gamma$-move \cite{ts} and $n$-gon move \cite{yn, ha}. 

In classical knot theory, the crossing change is a fundamental technique to unknot any given knot. Any classical knot diagram can be converted into a descending diagram by the following procedure. Choose a base point which is an arbitrary point on a knot diagram, distinct from all crossing points. Now we travel along the knot from the base point according to the orientation of the knot. When we meet a crossing, if we first pass the crossing from the lower arc, we apply a crossing change here; if we first pass the crossing from the upper arc, we skip it. Finally, we get back to the initial point. After making all of these crossing changes, the resulting diagram is called a descending diagram. A descending classical knot diagram represents the unknot. However, crossing change is not an unknotting operation for virtual knots because some descending virtual knot diagrams are not the unknot. On the other hand, the $\Delta$-move, $\sharp$-move, $\Gamma$-move, and $n$-gon move can be realized by a finite sequence of crossing changes, therefore, none of them is an unknotting operation for virtual knots. Crossing virtualization is a local move on a classical crossing that change a classical crossing into a virtual crossing as shown in Fig.\ref{virt}. Crossing virtualization is an unknotting move for virtual knots and links \cite{yo, ham}.

Hoste, Nakanishi and Taniyama in \cite{jh} introduced the $H(n)$-move for $n \geq 2$, as shown in Fig.\ref{hnm} . They proved that the $H(n)$-move is an unknotting operation for classical knots and links. Therefore, any two classical knot diagrams can be transformed into each other by a finite sequence of $H(n)$-moves plus some Reidemeister moves. An $H(n)$-move preserves the number of link components. They defined an $H(n)$-move for non-oriented link diagrams, and they defined an $SH(n)$-move as a special type of $H(n)$-move preserving the orientation of arcs outside the trivial tangle. From now on, unless otherwise stated, we use the name $H(n)$-move for both oriented and non-oriented diagrams of virtual knots and links in this paper. We shall prove that the $H(n)$-move is an unknotting operation for oriented diagrams of virtual knots and links. All the results of oriented diagrams are valid for non-oriented diagrams of virtual knots and links. We shall also prove that the $H(n)$-move is an unknotting operation for welded knots and links.

\begin{figure}
\includegraphics[width=0.46\textwidth]{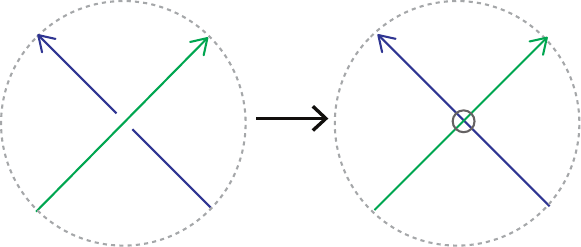}
\caption{Crossing virtualization}\label{virt}
\end{figure} 

\begin{figure}
\centering
\includegraphics[width=0.7\textwidth]{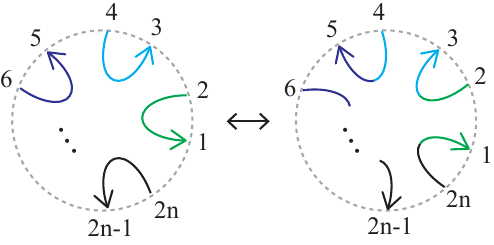}
 \caption{An $H(n)-move$}
 \label{hnm}
\end{figure}

\section{$H(n)$-move is unknotting operation for virtual links}
Let $V$ be a virtual knot diagram. If $V$ has no classical crossings, then using the virtual Reidemeister moves $(VR1-VR3)$ we can transform $V$ into the diagram of the unknot with no crossing. If $V$ has at least one classical crossing, we shall show that any classical crossing can be transformed into a virtual crossing using generalized Reidemeister moves together with $H(n)$-moves.

\begin{theorem}\label{th1}
Given any integer $n (\geq 2)$, any virtual knot diagram  can be transformed into a trivial knot diagram by a finite sequence of $H(n)$-moves plus some generalized Reidemeister moves.
\end{theorem}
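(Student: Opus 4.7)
The plan is to reduce the theorem to the known fact (cited above as \cite{yo, ham}) that crossing virtualization is an unknotting operation for virtual knots and links. Given this, it suffices to prove the following local claim: at any classical crossing of a virtual diagram, the virtualization move (replacing that classical crossing by a virtual crossing) can be realized by a finite sequence of $H(n)$-moves together with generalized Reidemeister moves. Once this local claim is established, the theorem follows immediately --- virtualize each classical crossing in turn using $H(n)$-moves and generalized Reidemeister moves, after which the resulting diagram contains only virtual crossings and can be reduced to the trivial unknot by virtual Reidemeister moves $VR1$--$VR3$ alone.

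To prove the local claim, I would fix a classical crossing $c$ and build a small diagrammatic neighbourhood around it. First, apply $R2$ moves to introduce $n-1$ extra parallel arcs next to one of the strands through $c$, producing a trivial $n$-strand tangle adjacent to $c$. Then apply an $H(n)$-move to this trivial tangle, which replaces it with the band-like tangle of Fig.\ref{hnm}. Finally, use classical Reidemeister moves, virtual Reidemeister moves, and the mixed move $VR4$ to simplify the resulting local picture: pushing classical strands across virtual crossings, cancelling opposite classical crossings in pairs, and removing the auxiliary kinks created earlier. If the $H(n)$-move is placed and oriented correctly, this simplification will leave exactly one virtual crossing in place of $c$. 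A useful guiding idea is that $VR4$ lets us slide classical arcs freely across virtual crossings, providing the flexibility needed to localize and cancel the auxiliary classical crossings introduced by the $H(n)$-move.

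The main obstacle will be giving the explicit diagrammatic reduction that works uniformly for every $n \geq 2$ and for either sign of the classical crossing at $c$. The $H(n)$-move produces a fairly intricate local tangle, and one must verify carefully that the sequence of generalized Reidemeister moves really does collapse this tangle down to a single virtual crossing, with no residual classical crossings, no extra link components, and no change to the diagram outside a small disk. Carrying out the verification at the level of oriented diagrams, so that the statement applies to oriented virtual knots and links as claimed, adds an extra bookkeeping layer because the $H(n)$-move in the oriented setting comes with several sign and direction conventions that must each be handled. Once the local claim is confirmed, the global conclusion of Theorem \ref{th1} follows by induction on the number of classical crossings of the given virtual diagram.
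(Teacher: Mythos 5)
Your global architecture is exactly the paper's: reduce everything to the local claim that a single classical crossing can be virtualized by $H(n)$-moves plus generalized Reidemeister moves, then virtualize every classical crossing and finish with $VR1$--$VR3$ on the resulting all-virtual diagram. The problem is that you never actually establish the local claim, and the local claim is the entire mathematical content of Theorem \ref{th1}. Your second and third paragraphs describe a strategy (pad with $R2$ moves to get an $n$-strand trivial tangle, apply one $H(n)$-move, then cancel) and then explicitly concede that ``the main obstacle will be giving the explicit diagrammatic reduction that works uniformly for every $n \geq 2$'' and that ``one must verify carefully'' that the tangle collapses to a single virtual crossing. That verification is not a routine afterthought: the $H(n)$-move produces only classical crossings, so the appearance of a virtual crossing in place of $c$ has to come from detour-type moves interacting with the new classical crossings in a specific way, and nothing in your sketch guarantees that the residue is a single virtual crossing rather than, say, the original crossing back again or a mess of leftover classical crossings. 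As written, the proposal is a plan with its central step deferred, not a proof.

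It is also worth noting how the paper sidesteps the ``uniform in $n$'' difficulty you flag, because that is the one genuinely structural idea your proposal is missing. Rather than seeking a single construction valid for all $n$, the paper proves the local claim by explicit diagrams only in the two base cases $n=2$ (Lemma \ref{lem1}, Fig.\ref{ctv}) and $n=3$ (Lemma \ref{lem2}, Fig.\ref{ctv3}), and then proves separately (Lemma \ref{lem3}) that an $H(n)$-move can be realized by an $H(n+2)$-move. Since every $n \geq 2$ is reachable from $2$ or $3$ by repeatedly adding $2$, the two base cases propagate to all $n$ by parity, and no uniform-in-$n$ diagrammatic argument is ever needed. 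If you adopt that decomposition, your remaining obligations shrink to two concrete finite pictures plus one easy stabilization lemma, which is a much more tractable target than the general-$n$ reduction you propose.
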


\begin{lemma}\label{lem1}
A crossing virtualization can be realized by a finite sequence of $H(2)$-moves plus some generalized Reidemeister moves. The $H(2)$-move is an unknotting operation for virtual knots.
\end{lemma}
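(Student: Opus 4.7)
The plan is to prove the lemma in two steps. First, we exhibit an explicit local procedure that realizes a single crossing virtualization using $H(2)$-moves and generalized Reidemeister moves; second, we combine this construction with the known fact that crossing virtualization is an unknotting operation for virtual knots to deduce the second claim of the lemma.

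For the first step, we work in a small disk $D$ around a chosen classical crossing $c$ (we treat the positive case; the negative case is symmetric). The idea is to enlarge the local picture, perform an $H(2)$-move on a carefully chosen trivial $2$-tangle containing $c$, and then simplify. Explicitly, apply an $R2$ move on one of the arcs meeting $c$ to introduce two auxiliary classical crossings, producing a trivial $2$-tangle $T$ in a slightly larger disk $D' \supset D$. Then apply an $H(2)$-move inside $D'$ that replaces $T$ by a trivial $2$-tangle $T'$ chosen so that the strands that formerly crossed at $c$ now cross virtually, flanked by the two auxiliary classical crossings in a canceling $R2$ configuration. Finally, use $VR2$, $VR3$ and the mixed Reidemeister move $VR4$ to slide the virtual crossing past the auxiliary pair, and cancel the auxiliary classical crossings via $R2$. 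The net effect is to replace $c$ by a virtual crossing, leaving the rest of the diagram unchanged, which is precisely a crossing virtualization.

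For the second step, it is known by \cite{yo, ham} that crossing virtualization is an unknotting operation for virtual knots: every virtual knot diagram can be transformed into the trivial knot diagram by a finite sequence of crossing virtualizations together with generalized Reidemeister moves. By the first step, each crossing virtualization can itself be realized by a finite sequence of $H(2)$-moves and generalized Reidemeister moves. Concatenating these sequences yields, for any virtual knot diagram $V$, a finite sequence of $H(2)$-moves and generalized Reidemeister moves that transforms $V$ into the trivial diagram. Hence the $H(2)$-move is an unknotting operation for virtual knots.

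The main obstacle is the explicit construction in the first step: one must choose the pre- and post-images of the $H(2)$-move so that both are genuine trivial $2$-tangles, so that the strand connectivities and orientations outside the tangle agree with the surrounding diagram, and so that after the subsequent $VR$ and $R2$ simplifications only a single virtual crossing remains at the site of $c$. This is essentially a figure-based verification, and the most delicate point is confirming that the mixed move $VR4$ (together with $VR2$ and $VR3$) is enough to pass the newly created virtual crossing cleanly through the auxiliary classical crossings so that those crossings end up in a canceling $R2$ position. Once this figure is drawn and checked, all remaining steps are routine applications of Reidemeister and virtual Reidemeister moves.
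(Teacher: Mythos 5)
Your proposal is correct and takes essentially the same route as the paper: the paper's proof of this lemma consists precisely of a figure (Fig.~\ref{ctv}) realizing a single crossing virtualization by $H(2)$-moves plus generalized Reidemeister moves, followed by the observation that virtualizing every classical crossing leaves a diagram with only virtual crossings, which is then trivialized by $VR1$--$VR3$. Your second step phrases this conclusion via the cited fact that virtualization is an unknotting operation, which is the same argument, and your first step is a verbal description of the kind of local figure the paper itself defers to, with the delicate point (checking that the tangle fed into the $H(2)$-move is genuinely the trivial $2$-tangle and that the virtual moves clean up the auxiliary crossings) correctly identified.
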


\begin{proof}
Suppose $V$ is a diagram of a virtual knot with some classical crossings. Orient the diagram and perform an $H(2)$-move at any classical crossing as shown in Fig.\ref{ctv}. We can replace a classical crossing by a virtual crossing using a finite sequence of $H(2)$-moves plus some generalized Reidemeister moves. All classical crossings of $V$ can be transformed into virtual crossings by $H(2)$-moves. As a result, $V$ becomes a trivial knot diagram.
\end{proof}

\begin{figure}
\centering
\includegraphics[width=0.6\textwidth]{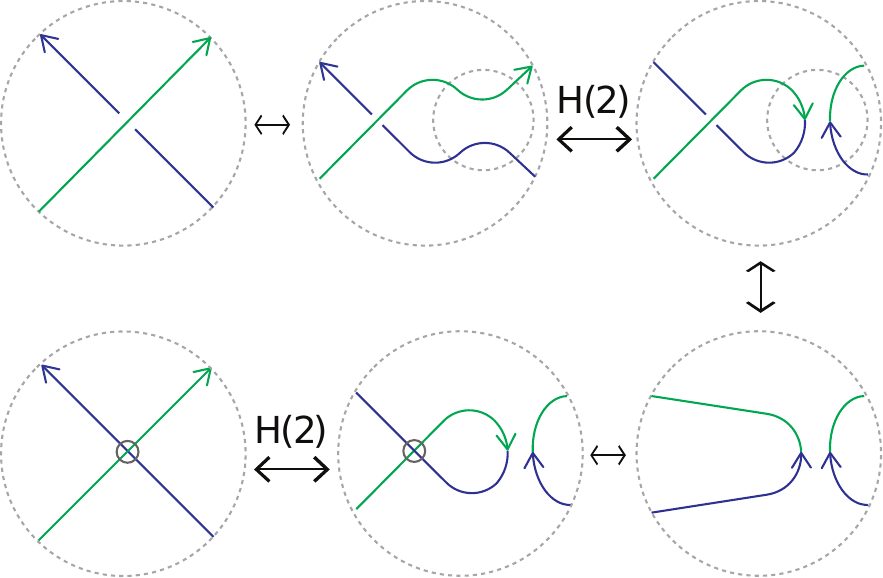}
 \caption{A crossing virtualization can be realized by $H(2)$-moves}
 \label{ctv}
\end{figure} 

\begin{figure}
\centering
\includegraphics[width=0.8\textwidth]{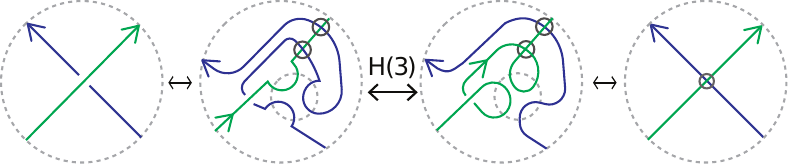}
 \caption{A crossing virtualization can be realized by $H(3)$-moves}
 \label{ctv3}
\end{figure} 

\begin{lemma}\label{lem2}
A crossing virtualization can be realized by a finite sequence of $H(3)$-moves plus some generalized Reidemeister moves.
\end{lemma}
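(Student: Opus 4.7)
The plan is to parallel the strategy of Lemma~\ref{lem1} in the three-strand setting of the $H(3)$-move. Given an oriented virtual diagram $V$ and any classical crossing $c$ of $V$, the goal is to produce an explicit local sequence of generalized Reidemeister moves together with $H(3)$-moves that replaces $c$ with a single virtual crossing while leaving the remainder of $V$ unchanged.

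First, I would enlarge the local picture around $c$ by introducing an auxiliary strand. A virtual Reidemeister move VR2 creates a pair of canceling virtual crossings carrying a small arc that can be slid into the neighborhood of $c$ using the mixed move VR4 together with further R2 and VR2 moves. After a brief maneuver, the small disk around $c$ contains a three-strand configuration which agrees with the left-hand tangle of an $H(3)$-move.

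Next, I would apply an $H(3)$-move to this tangle, replacing it with the alternative trivial three-strand tangle on the opposite side of the move. Finally, a short cleanup consisting of generalized Reidemeister moves (R2, R3 together with VR2, VR3, and VR4) cancels the auxiliary classical crossings that were introduced during the setup; after these cancellations the only crossing remaining inside the disk is a single virtual crossing at the former site of $c$. This explicit local realization is recorded in Fig.~\ref{ctv3}, and the lemma then follows by applying the procedure at each classical crossing of $V$ in turn.

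The main obstacle is combinatorial and pictorial rather than conceptual: one must arrange the preparatory moves so that the three-strand tangle appearing near $c$ is genuinely of the form required by one side of an $H(3)$-move, and arrange the subsequent cleanup so that every auxiliary classical crossing cancels while leaving exactly one virtual crossing in place of $c$. Once the move sequence drawn in Fig.~\ref{ctv3} is checked frame by frame, the proof is a direct pictorial verification.
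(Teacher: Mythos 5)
Your proposal takes essentially the same approach as the paper: the paper's entire proof of this lemma is the explicit local move sequence drawn in Fig.~\ref{ctv3}, which replaces a single classical crossing by a virtual one via $H(3)$-moves and generalized Reidemeister moves and then repeats this at every classical crossing --- exactly the pictorial realization you outline. Like the paper's own proof, yours ultimately defers the verification to a frame-by-frame check of that figure rather than to any textual argument, so there is nothing further to reconcile.
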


\begin{proof}
Let $V$ be a virtual knot diagram and suppose that $V$ has some classical crossings. We can replace a classical crossing by a virtual crossing using a finite sequence of $H(3)$-moves plus some generalized Reidemeister moves, as shown n Fig.\ref{ctv3}. Therefore, a crossing virtualization can be realized by a finite sequence of $H(3)$-moves plus some generalized Reidemeister moves. Finally, if $V$ has only virtual crossings, then using the virtual Reidemeister moves $(VR1-VR3)$ we can transform $V$ into the diagram of the unknot with no crossing.
\end{proof}

\begin{lemma}\label{lem3}
An $H(n)$-move can be realized by an $H(n+2)$-move.
\end{lemma}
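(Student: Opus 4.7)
My plan is to use the standard stabilization trick: introduce two auxiliary parallel strands alongside the $n$ strands on which we want to perform an $H(n)$-move, apply a single $H(n+2)$-move to the resulting $(n+2)$-tangle, and then cancel the auxiliary strands using generalized Reidemeister moves.

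First, inside the disk $D$ where the $H(n)$-move is to be applied, I would use an R2 move on a nearby piece of the diagram to produce a short ``U-shaped'' pair of parallel arcs adjacent to the $n$ existing strands, sliding it into position by VR2, VR3, and the mixed Reidemeister move to pass any intervening virtual crossings. This enlarges $D$ to a slightly larger disk $D'$ containing an $(n+2)$-tangle in the standard configuration of Fig.\ref{hnm}. Second, I would apply the $H(n+2)$-move inside $D'$; by the definition of the move, the original $n$ strands are transformed exactly as under the intended $H(n)$-move, while the two auxiliary strands receive an extra local band pattern. Third, I would observe that outside $D'$ the two auxiliary strands are joined only by the R2 kink created in Step~1, so together with that kink they form a sub-tangle bounded by a disk in the plane of the diagram; such a sub-tangle is virtually trivial and can therefore be collapsed by a finite sequence of R1, R2 and VR1-VR3 moves, which in particular undoes the initial R2.

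The hard part will be Step~3, namely verifying that the auxiliary sub-tangle produced by the $H(n+2)$-move really is removable by generalized Reidemeister moves. This reduces to a local picture-chase in Fig.\ref{hnm}, analogous in spirit to the reductions already performed in Lemmas \ref{lem1} and \ref{lem2}. The crucial point is that, because the auxiliary pair was introduced by R2 and is virtually unlinked from everything else in the diagram, the extra band pattern it receives from the $H(n+2)$-move forms a closed local piece that can be pushed back to a trivial U-shape using only classical and virtual Reidemeister moves. The composite of Steps~1--3 then has exactly the same effect on the diagram as the original $H(n)$-move, establishing the lemma.
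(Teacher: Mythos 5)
Your proposal is essentially the same argument the paper gives: the paper's proof consists entirely of Fig.\ref{hn+2}, which depicts exactly your stabilization trick of introducing two auxiliary parallel strands next to the $n$-strand trivial tangle, applying one $H(n+2)$-move, and cancelling the extra clasps with (generalized) Reidemeister moves. The picture-chase you defer in Step~3 --- checking that the residual band pattern on the auxiliary pair retracts --- is precisely the content of that figure, so your outline matches the paper's route rather than offering a different one.
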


\begin{proof}
In Fig.\ref{hn+2} we illustrate that an $H(n)$-move can be realized by an $H(n+2)$-move.
\end{proof}

\begin{figure}
\centering
\includegraphics[width=0.9\textwidth]{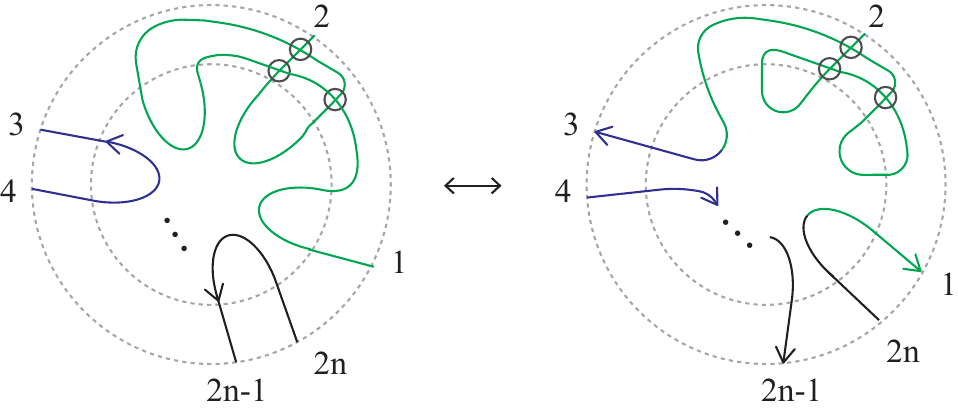}
 \caption{An $H(n)$-move can be realized by an $H(n+2)$-move}
 \label{hn+2}
\end{figure} 

\noindent \textit{Proof of Theorem \ref{th1}.}

\noindent Using Lemmas \ref{lem1}, \ref{lem2} and \ref{lem3}, we can conclude that a crossing virtualization can be realized by a finite sequence of $H(n)$-moves plus generalized Reidemeister moves. Therefore, any classical crossing can be transformed into a virtual crossing using generalized Reidemeister moves together with $H(n)$-moves. All classical crossings of  a virtual knot diagram can be transformed into virtual crossings by $H(n)$-moves. As a result, the virtual knot diagram become a trivial knot diagram. Therefore, the $H(n)$-move is unknotting operation for virtual knots. Similarly, a virtual link diagram, $V=v_1 \cup v_2 \cup \cdots v_n $ with $n$ components, can be unlinked by generalized Reidemeister moves together with $H(n)$-moves. $\square$

In virtual knot theory, unlike generalized Reidemeister moves there are two other moves that are not allowed. The two moves shown in Fig.\ref{fm}. are called forbidden moves, they are known as the $F_O$ (over crossing) move and $F_U$ (under crossing) move. Sam Nelson \cite{sn} and Taizo kanenobu \cite{tk} showed that the two forbidden moves together can unknot all virtual knot diagrams. Taizo kanenobu also showed that a $\Delta$-unknotting operation is realized by a finite sequence of generalized Reidemeister moves and forbidden moves.

If we allow the $F_O$ move then virtual knot theory become welded knot theory. If both forbidden moves are allowed then it become the trivial theory, since any knot in this theory is equivalent to the trivial knot. Generalized Reidemeister moves along with $F_O$ move are called welded Reidemeister moves. Two welded knot diagrams $W$ and $W'$ are equivalent if and only if $W$ can be transformed into $W'$ using welded Reidemeister moves. Welded knot theory is closely related to virtual knot theory therefore it is important to check the effect of $H(n)$-move on welded links.

\begin{figure}
     \centering
      \begin{subfigure}{0.4\textwidth}
      \centering
         \includegraphics[width=\textwidth]{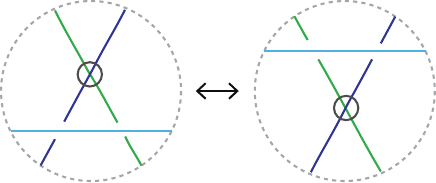}
         \caption{$F_O$ move}
         \end{subfigure}
     \hspace{0.5cm}
      \begin{subfigure}{0.4\textwidth}
       \centering
 \includegraphics[width=\textwidth]{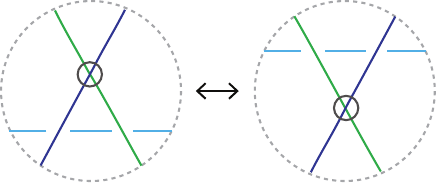}
 \caption{$F_U$ move}
     \end{subfigure}
\caption{The forbidden moves}
\label{fm}
\end{figure}

Since the $H(n)$-move is an unknotting operation for virtual knots, it is also an unknotting operation for welded knots. The $F_U$  move is an unknotting operation for welded knots. We will show that the $F_U$  move can be realized by a finite sequence of $H(n)$-moves plus welded Reidemeister moves.

\begin{theorem}
An $F_U$ move can be realized by a finite sequence of $H(n)$-moves plus welded Reidemeister moves.
\end{theorem}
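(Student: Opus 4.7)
The plan is to realize the $F_U$ move as a concatenation of a crossing virtualization, an ordinary virtual Reidemeister move, and the inverse of a crossing virtualization. By Lemma \ref{lem1} (together with Lemmas \ref{lem2} and \ref{lem3} if we want to work with $n\ne 2$), each crossing virtualization is already a finite sequence of $H(n)$-moves plus generalized Reidemeister moves, and these lie inside the welded toolkit. So every step of the construction will itself be either an $H(n)$-move or a welded Reidemeister move, which is exactly what the theorem asks for.

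In the local disk of an $F_U$ move, a classical under-crossing sits adjacent to a virtual crossing in the forbidden configuration. First I would apply the crossing-virtualization procedure of Lemma \ref{lem1} to replace this classical under-crossing by a virtual crossing. After this step, every crossing inside the local disk is virtual, so the configuration can be transformed into the post-$F_U$ arrangement by legitimate virtual Reidemeister moves, namely a $VR3$ move together with any needed $VR1$ or $VR2$ moves and planar isotopy. Finally, I would apply the virtualization sequence of Lemma \ref{lem1} in reverse to the distinguished virtual crossing, namely the one that was the original under-crossing, to recover a classical under-crossing in its new position. Each $H(n)$-move can be undone by another $H(n)$-move, and every generalized Reidemeister move is invertible, so the reverse sequence is again a finite sequence of $H(n)$-moves plus welded Reidemeister moves.

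The step I expect to be most delicate is checking that the reversed virtualization reconstructs a classical under-crossing of the correct sign and orientation, rather than an over-crossing or a crossing of opposite sign. This should follow from the fact that the diagrammatic procedure of Lemma \ref{lem1} is local and orientation-preserving: the specific sequence that takes a given oriented under-crossing to a virtual crossing, when run backwards, reconstructs exactly that under-crossing up to planar isotopy. A minor technical point is to confirm that the rearrangement in the middle step really is a legitimate virtual Reidemeister move and not a forbidden move in disguise; this is automatic, because by that stage every crossing in the local disk is virtual, so no classical over- or under-strand is present to create a forbidden configuration.
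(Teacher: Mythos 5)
Your argument is correct in substance and is a clean, modular route: since Lemma~\ref{lem1} (with Lemmas~\ref{lem2} and \ref{lem3} for general $n$) lets you trade a classical crossing for a virtual one and back again by $H(n)$-moves plus generalized Reidemeister moves, and a strand all of whose crossings inside the disk are virtual can be slid past a virtual crossing by the purely virtual move $VR3$ (the detour move), the forbidden configuration dissolves. The paper instead just exhibits an explicit local sequence of diagrams (Fig.~\ref{fur}) realizing $F_U$; your reduction is more conceptual and buys more, since the same virtualize--detour--devirtualize template immediately disposes of the $CF$-move theorem as well, and indeed of any local move that becomes a consequence of the virtual Reidemeister moves once its classical crossings are virtualized. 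One correction is needed: the local disk of an $F_U$ move contains \emph{two} classical under-crossings (the moving strand passes under each of the other two strands) together with one virtual crossing between those two strands, not a single classical crossing adjacent to a virtual one; you must therefore apply the virtualization of Lemma~\ref{lem1} to \emph{both} classical crossings before the $VR3$ step, and de-virtualize both afterwards. This changes nothing structurally --- your own phrase ``every crossing inside the local disk is virtual'' is exactly the condition you need to arrange --- and your observations that $H(n)$-moves and Reidemeister moves are invertible, and that the reversed sequence reconstructs an under-crossing of the correct sign and orientation, remain valid crossing by crossing.
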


\begin{proof}
An $F_U$ move can be realized by a finite sequence of $H(n)$-moves plus welded Reidemeister moves as shown in Fig.\ref{fur}.
\end{proof}

\begin{figure}
\centering
\includegraphics[page=1, width=0.6\textwidth]{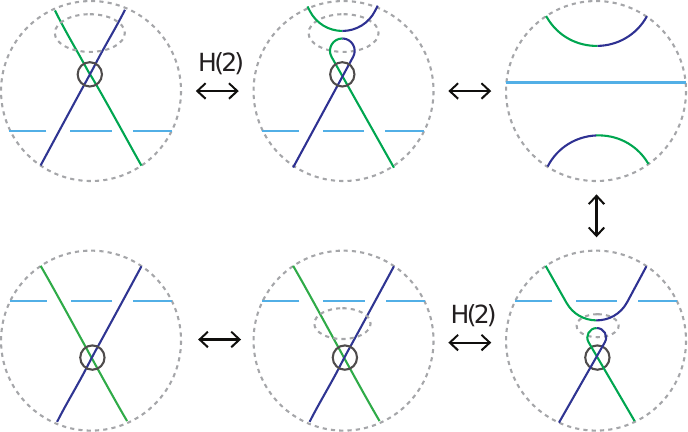}
\caption{An $F_U$ move can be realized by a finite sequence of $H(n)$-moves}
\label{fur}
\end{figure}

\begin{figure}
\centering
\includegraphics[page=1, width=0.4\textwidth]{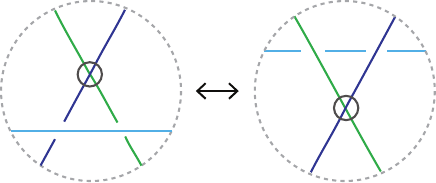}
\caption{A $CF$-move}
\label{cfm}
\end{figure}

\begin{figure}
\centering
\includegraphics[page=1, width=0.6\textwidth]{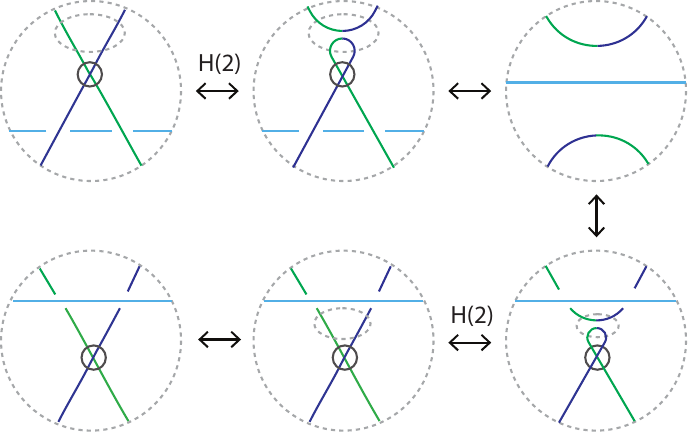}
\caption{An $CF$-move can be realized by a finite sequence of $H(n)$-moves}
\label{cfr}
\end{figure}

The $CF$-move introduced by Oikawa in \cite{to} is a local move on virtual link diagrams as shown in Fig.\ref{cfm}. It is defined as a combination of crossing changes and a forbidden move. Two virtual links are $CF$-equivalent if their diagrams are related by a finite sequence of generalized Reidemeister moves and $CF$-moves. Oikawa proved that any virtual knot is $CF$-equivalent to the trivial knot, i.e. the $CF$-move is an unknotting operation for virtual knots.

\begin{theorem}
A $CF$-move can be realized by a finite sequence of $H(n)$-moves plus some generalized Reidemeister moves.
\end{theorem}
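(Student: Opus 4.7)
The plan is to realize a $CF$-move diagrammatically as a finite sequence of $H(n)$-moves and generalized Reidemeister moves, following the same style as the proof of the previous theorem for the $F_U$-move. By Oikawa's definition, a $CF$-move inside a small disk is a combination of crossing changes and a forbidden move, so the first step is to verify that each of these two ingredients is individually expressible using $H(n)$-moves plus generalized Reidemeister moves. For the crossing change, I would appeal to Lemmas \ref{lem1}--\ref{lem3}: crossing virtualization is realizable by $H(n)$-moves plus generalized Reidemeister moves, and a crossing change can in turn be produced by a short combination of crossing virtualizations together with $VR$-moves, so a crossing change is itself realizable in the desired way.

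For the forbidden-move part I would \emph{not} try to realize a free $F_U$-move in isolation (since the previous theorem used welded Reidemeister moves, which are no longer available). Instead I would exploit the specific shape of the $CF$-tangle: the crossing changes built into the $CF$-move are positioned so that, once they have been performed and combined with the virtualization identities from Lemmas \ref{lem1}--\ref{lem3}, the local configuration is related to the target by a single $H(n)$-move on the strands inside the disk, followed by a few $R1$--$R3$ and $VR1$--$VR4$ moves to clean up. Writing this sequence out, tangle by tangle, yields the diagrammatic sequence shown in Figure \ref{cfr}, which plays the role of the proof in the same way that Figure \ref{fur} did for the $F_U$-move.

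The main obstacle I anticipate is to ensure that the decomposition avoids any implicit use of the $F_O$-move, since only generalized Reidemeister moves are permitted here. Concretely, I expect the technical core to be the bookkeeping of over/under information across the intermediate diagrams, checking at each step that virtual crossings appearing through crossing virtualizations are routed past classical strands using $VR4$ (the permitted mixed Reidemeister move) rather than the forbidden $F_O$-move, and that the closing $H(n)$-move correctly matches the target $CF$-tangle.
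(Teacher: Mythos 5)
Your first ingredient is sound: a crossing change is two applications of (the inverse of) crossing virtualization, so by Lemmas \ref{lem1}--\ref{lem3} it is realized locally by $H(n)$-moves plus generalized Reidemeister moves. The gap is in the second half. Having correctly identified the crux --- that the forbidden-move component of the $CF$-move must be produced \emph{without} access to $F_O$ --- you do not resolve it: the assertion that ``the local configuration is related to the target by a single $H(n)$-move followed by a few $R1$--$R3$ and $VR1$--$VR4$ moves to clean up'' is exactly the statement that needs to be demonstrated, and ``writing this sequence out, tangle by tangle'' is deferred rather than done. Since the paper's entire proof consists of exhibiting that explicit sequence (Fig.~\ref{cfr}), a proposal that stops at ``such a sequence should exist, modulo bookkeeping I anticipate to be delicate'' has not proved the theorem; the anticipated obstacle is the theorem.

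There is, however, a clean way to close the gap using only the lemmas already established, and it avoids your ad hoc ``closing $H(n)$-move'' entirely. The two tangles related by a $CF$-move (Fig.~\ref{cfm}) join the boundary points of the disk in the same pattern. So: apply Lemmas \ref{lem1}--\ref{lem3} to replace \emph{every} classical crossing inside the disk, on both sides of the move, by a virtual crossing; this uses only $H(n)$-moves and generalized Reidemeister moves, all supported in the disk. The two resulting tangles are purely virtual with identical endpoint connectivity, hence are related by $VR1$--$VR3$ alone (the detour principle --- no $F_O$ or mixed move is ever needed because no classical crossings remain inside the disk). Concatenating the first sequence, the virtual Reidemeister moves, and the reverse of the second sequence realizes the $CF$-move. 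This is presumably the content of Fig.~\ref{cfr}, and note that the same argument would in fact strengthen the paper's preceding theorem: it realizes $F_U$ using only generalized Reidemeister moves rather than welded ones.
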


\begin{proof}
A $CF$-move move can be realized by a finite sequence of $H(n)$-moves plus some generalized Reidemeister moves as shown in Fig.\ref{cfr}.
\end{proof}

\bibliographystyle{amsplain}
}
\end{document}